\newtheorem{theorem}{Theorem}%[section]
\newtheorem{lemma}[theorem]{Lemma}
\newenvironment{proof}
     {\medskip\noindent{\bf Proof:}\hspace{1mm}}
      {\hfill$\Box$\medskip}
\def\qed{\ifvmode\mbox{ }\else\unskip\fi\hskip 1em plus 10fill$\Box$}
\long\def\ignore#1{}
\def\rc{\advance\leftskip by 0pt plus 40em\rightskip=\leftskip
\parfillskip=0pt
\spaceskip=.3333em \xspaceskip=.5em \pretolerance=9999
\tolerance=9999 \hyphenpenalty=9999 \exhyphenpenalty=9999}
\title{The Range of a Random Walk on a Comb}
\author{J\'anos Pach\thanks{EPFL, Lausanne and R\'enyi Institute,
    Budapest. Supported by NSF Grant CCF-08-30272, by OTKA under EUROGIGA
    projects GraDR and ComPoSe 10-EuroGIGA-OP-003, and by Swiss National
    Science Foundation Grants 200021-137574 and 200020-144531. Email: {\tt pach@cims.nyu.edu}}
  \and G\'abor Tardos\thanks{R\'enyi Institute, Budapest. Supported by an NSERC
    grant and the OTKA grant NN102029. Email: {\tt
      tardos@renyi.hu}}}
\date{}
\begin{document}
\maketitle
\begin{abstract}
The graph obtained from the integer grid
$\mathbb{Z}\times\mathbb{Z}$ by the removal of all horizontal
edges that do not belong to the $x$-axis is called a {\em comb}.
In a random walk on a graph, whenever a walker is at a vertex $v$,
in the next step it will visit one of the neighbors of $v$, each
with probability $1/d(v)$, where $d(v)$ denotes the degree of $v$.
We answer a question of Cs\'aki, Cs\"org\H o, F\"oldes,
R\'ev\'esz, and Tusn\'ady by showing that the expected number of
vertices visited by a random walk on the comb after $n$ steps is
$\left(\frac1{2\sqrt{2\pi}}+o(1)\right)\sqrt{n}\log n.$ This
contradicts a claim of Weiss and Havlin.
\end{abstract}

\section{Introduction}
The theory of finite Markov chains or, in graph-theoretic
language, the theory of random walks on graphs is a classical
topic in probability theory. It has many applications from flows
in networks, through statistical physics to complexity theory in
computer science (see Lov\'asz~\cite{Lo96} and Woess~\cite{Wo00}).

\smallskip
To obtain a {\em random walk} in a locally finite graph $G$ with
vertex set $V(G)$, we start at any vertex $v\in V(G)$ and in the
next step we move to one of its neighbors, independently of all
previous events, with probability $1/d(v)$. Here, $d(v)$ denotes
the number of edges in $G$ incident to $v$. Every neighbor of $v$
is equally likely to come next. Perhaps the simplest example is a
random walk on the $d$-dimensional integer grid $\mathbb{Z}^d$,
studied by P\'olya~\cite{Po21}. He proved that for $d=1$ and $2$,
with probability $1$, a random walk will return to its starting
point infinitely often, while for $d\ge 3$ only a finite number of
times.

\smallskip
The $2$-dimensional {\em comb} $\mathbb{C}^2$ is a spanning tree
of the integer grid $\mathbb Z^2$ obtained by removing all of its
``horizontal'' edges (that is, edges parallel to the $x$-axis)
that do not belong to the $x$-axis. In this graph, all vertices
(or ``sites'') $(x,y)$ have degree $2$, except for the vertices of
the form $(x,0)$, which have degree $4$. For any nonnegative
integer $n$, let $W_n=(X_n, Y_n)$ be the random variable denoting
the position of the walker after $n$ steps. We assume the walk
starts at the origin, so $W_0=(0,0)$. The study of random walks on
the comb was initiated by Weiss and Havlin~\cite{WeH86}, as a
model of ``anomalous diffusion on fractal structures.''  These
investigations were later extended to combs of higher dimensions
by Gerl~\cite{Ge86} and Cassi and Regina~\cite{CaR92}. Krishnapur
and Peres~\cite{KrP04} proved that, on the $2$-dimensional comb,
with probability $1$, two independent walkers meet only a finite
number of times. This is a rather surprising phenomenon, in view
of the fact that the random walk is {\em recurrent},
that is, a single random walker visits each site an infinite
number of times with probability 1. Some insight was provided by
Bertacchi and Zucca~\cite{BeZ03} and by Bertacchi~\cite{Be06},
whose asymptotic estimates suggested that a walker spends most of
her time moving vertically along a ``tooth'' of the comb. Several
strong approximation and limit theorems for random walks on a comb
have been established by Cs\'aki, Cs\"org\H o, F\"oldes, and
R\'ev\'esz~\cite{CsCs09},\cite{CsCs11}.

\smallskip
Let $V_n$ denote the number of vertices (sites) visited during the
first $n$ steps of the random walk $W_n=(X_n,Y_n)$ on the
$2$-dimensional comb. According to the main result in
\cite{WeH86}, the expected value of $V_n$ is asymptotically
proportional to $n^{3/4}$, for large $n$. It is not hard to see
that almost surely the deviation of the horizontal projection
$X_n$ of the walk is roughly $n^{1/4}$, while the expected length
of the vertical projection is of order $n^{1/2}$. See, e.g.,
Bertacchi~\cite{Be06} (cp. Panny and Prodinger~\cite{PaP85}). This
suggests that the expected number of sites visited by the random
walk on $\mathbb{C}^2$ is around $n^{1/4}\cdot n^{1/2}=n^{3/4}$,
as was stated by Weiss and Havlin~\cite{WeH86}. The aim of this
note is to show that the truth is closer $n^{1/2}$, than to
$n^{3/4}$.

\smallskip
 All logarithms used in this paper are natural logarithms.
\medskip
\begin{theorem}\label{fo} The expected value of $V_n$, the number of
vertices visited during the first $n$ steps of a random walk on the
$2$-dimensional comb, satisfies
$$E[V_n]=\left(\frac1{2\sqrt{2\pi}}+o(1)\right)\sqrt{n}\log n.$$
\end{theorem}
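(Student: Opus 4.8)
The plan is to split the visited sites into those on the spine (the $x$-axis) and those strictly inside the teeth, and to show that essentially all of the main term comes from the teeth. Write $V_n=R_n+S_n$, where $R_n$ is the number of visited spine vertices $(x,0)$ and $S_n$ is the number of visited vertices $(x,y)$ with $y\neq 0$. Since the $x$-coordinate is frozen while the walker is off the spine, $X_n$ changes only during steps spent at $y=0$, and among those steps only half are horizontal; hence the horizontal projection is a simple random walk run for a number of steps comparable to the spine local time. I will show below that this local time is of order $\sqrt n$, so that $R_n$ is at most of order $n^{1/4}$ in expectation and is negligible next to $\sqrt n\log n$. Everything therefore reduces to estimating $E[S_n]$.

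For the teeth, observe that on any single tooth the visited sites form an interval around the base, so $S_n=\sum_{y\ge 1}(N_y^+ + N_y^-)$, where $N_y^+$ (resp. $N_y^-$) is the number of distinct teeth on which the walker reaches height $+y$ (resp. $-y$). By the up--down symmetry of the comb $E[N_y^+]=E[N_y^-]$, so $E[S_n]=2\sum_{y\ge 1}E[N_y^+]$. I would compute $E[N_y^+]$ through two ingredients. First, let $T_n$ be the spine local time, the number of steps $t<n$ with $Y_t=0$. Passing to the embedded simple random walk obtained by deleting the holding moves at the origin (the origin being the only lazy site, with holding probability $1/2$), the number of excursions equals the local time at $0$ of an ordinary simple random walk of length $\approx n$, which has expectation $\sim\sqrt{2n/\pi}$; the factor $2$ created by the laziness then gives $E[T_n]\sim\sqrt{8/\pi}\,\sqrt n=\frac{4}{\sqrt{2\pi}}\sqrt n$. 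Second, each visit to the spine launches an upward excursion with probability $1/4$, and a single upward excursion reaches height $y$ with probability exactly $1/y$ by the gambler's ruin estimate. Consequently the expected number of upward excursions reaching height at least $y$ is $\approx E[T_n]/(4y)$, and when reaching height $y$ is rare enough that no tooth is counted twice, $E[N_y^+]\approx E[T_n]/(4y)$.

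Summing over $y$ produces the logarithm. The approximation $N_y^+\approx(\text{number of reaching excursions})$ holds precisely when $y\gg n^{1/4}$: a typical visited tooth receives about $n^{1/4}$ excursions, so the expected number of these reaching height $y$ is $\approx n^{1/4}/y$, which is $o(1)$ exactly above this threshold, while below it $N_y^+$ saturates at the order of $R_n\approx n^{1/4}$. The upper cutoff is at $y\approx\sqrt n$, the scale of the maximal height. Hence $E[S_n]\approx 2\sum_{n^{1/4}\le y\le\sqrt n}\frac{E[T_n]}{4y}\approx\frac{E[T_n]}{2}\bigl(\log\sqrt n-\log n^{1/4}\bigr)=\frac{E[T_n]}{2}\cdot\frac14\log n=\frac{E[T_n]}{8}\log n,$ and substituting $E[T_n]\sim\frac{4}{\sqrt{2\pi}}\sqrt n$ yields $\bigl(\frac{1}{2\sqrt{2\pi}}+o(1)\bigr)\sqrt n\log n$, as claimed. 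Because both cutoffs are powers of $n$, only the exponents $1/4$ and $1/2$ enter the leading coefficient of $\log n$, so the answer is insensitive to the $O(1)$ constants hidden in the thresholds --- which is what makes the sharp constant attainable.

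The main obstacle is to justify $E[N_y^+]\approx E[T_n]/(4y)$ rigorously and uniformly in $y$. This requires (i) controlling the dependence among excursions on the same tooth and the resulting overcounting, for which one needs a bound on the second moment $\sum_x \ell_x^2$ of the local-time profile $\{\ell_x\}$ of the horizontal walk (a Ray--Knight-type input), and (ii) handling the coupling created by the finite time budget, since the rare deep excursions that reach large heights consume a nonnegligible share of the $n$ steps and thereby perturb both $T_n$ and the remaining excursions. I expect the cleanest route is to condition on the spine local-time profile, estimate $E[N_y^+]$ conditionally using the independent excursion heights, and then show that the fluctuations of $T_n$ and of the profile contribute only to the $o(\sqrt n\log n)$ error. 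Some care near the endpoints $y\approx n^{1/4}$ and $y\approx\sqrt n$ will be needed to confirm that those regimes contribute only at the lower order $O(\sqrt n)$.
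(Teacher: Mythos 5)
Your skeleton is arithmetically sound---the local time $E[T_n]\sim\frac{4}{\sqrt{2\pi}}\sqrt n$, the per-spine-step escape probability $\frac1{4y}$, and the cutoffs $n^{1/4}$, $n^{1/2}$ do combine to give the constant $\frac1{2\sqrt{2\pi}}$---but the proposal has a genuine gap precisely at what you yourself call ``the main obstacle'': the estimate $E[N_y^+]\approx E[T_n]/(4y)$ is never proved, and its lower-bound direction \emph{is} the whole theorem. You must show that the overcounting (two or more excursions on the same tooth reaching height $y$) costs only $o(1/y)$ per spine step, uniformly down to $y$ slightly above $n^{1/4}$, where a typical visited tooth receives order $n^{1/4}$ excursions and the event is borderline; the route you sketch (conditioning on the horizontal local-time profile, Ray--Knight second moments, plus untangling the time-budget coupling caused by deep excursions) is left entirely unexecuted and is substantially harder than the sentence describing it. There is also a quantitative slip at the top of the range: the cutoff ``$y\approx\sqrt n$, the scale of the maximal height'' cannot be taken for granted, because if you only use the bound $E[T_n]/(4y)$, the heights $\sqrt n\le y\le n$ contribute another $\frac12\log n$, which would roughly triple your constant; you need a Chernoff or reflection estimate showing heights beyond $\approx 2\sqrt{n\log n}$ are essentially never reached, and that the band $[\sqrt n,\,2\sqrt{n\log n}]$ contributes only $O(\sqrt n\log\log n)$.

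It is worth seeing how the paper gets around exactly these two obstacles, because the devices are different from anything in your plan. First, instead of counting (tooth, height) pairs, the paper counts each off-spine site by the \emph{unique vertical move that visits it for the last time}. This makes the decomposition $E[V'_n]=E[c]+\sum_{i,j}q_{i,j}$ exact with nothing to overcount, so the upper bound $q_{i,j}\le 2p_{i,j}r_j$ requires no second-moment or profile information at all; and indexing by vertical moves of the reduced projection (an honest simple random walk) dissolves the time-budget coupling you worry about in (ii). Second, the lower bound needs your missing ``no return'' estimate, and the paper obtains it by a bootstrap rather than by Ray--Knight analysis: the already-proved upper bound $E[V'_{2n}]=O(\sqrt n\log n)$, combined with the observation that reaching a site $(i,j)$ forces at least $|j|+1$ visited sites on that tooth and has probability at least $t_iu_j$, yields $u_j=O(n^{1/4}\log n/(|j|+1))$ (Lemma~\ref{lem:u}); hence $u_j=o(1)$ once $j>n^{1/4}\log^2 n$, and discarding the band $n^{1/4}\le j\le n^{1/4}\log^2 n$ costs only $O(\sqrt n\log\log n)$. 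If you want to salvage your (tooth, height) decomposition, the shortest repair is to import both of these ideas: replace ``number of teeth reaching height $y$'' by ``number of last visits at height $y$,'' and prove your no-return estimate by bootstrapping an upper bound of the correct order rather than by controlling the local-time profile.
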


\section{Elementary properties of a random walk on $\mathbb{Z}$}\label{line}
We collect some well-known and easy facts about 1-dimensional
random walks on $\mathbb{Z}$; all of them can be found, e.g., in
\cite{Fe68} or \cite{Re07}. For any pair of integers $n, i \ge 0$,
let $p_{n,i}$ denote the probability that starting at $0$, after
$n$ steps we end up at the vertex (integer) $i$. We have
$$p_{n,i}={n \choose \frac{n+i}2}\frac1{2^{n}},$$
where the value of the above binomial coefficient is considered
$0$, whenever $\frac{n+i}2$ is not an integer. It follows that
$$p_{n,i}\le {n \choose \lfloor{n/2}\rfloor}\frac1{2^{n}}
=\left(\sqrt{\frac2{\pi}}+o(1)\right)\frac1{\sqrt{n}},$$ as $n$
tends to infinity. Moreover, we have

\begin{equation}\label{prob}
p_{n,i}=\left(\sqrt{\frac2{\pi}}+o(1)\right)\frac1{\sqrt{n}},
\end{equation}
whenever $i/\sqrt{n}\rightarrow 0$ and $n+i$ is even.

\smallskip
Let $A_n$ stand for the number of times the random walk visits the
origin during the first $n$ steps and $B_n$ for the number of
sites visited during the the first $n$ steps. We have
\begin{equation}\label{A_n}
E[A_n]=\sum_{m=0}^np_{m,0}=\left(\sqrt{\frac2{\pi}}+o(1)\right)\sqrt
n,
\end{equation}
\begin{equation}\label{B_n}
E[B_n]=\left(2\sqrt{\frac2{\pi}}+o(1)\right)\sqrt n.
\end{equation}
See, e.g., \cite{Re07}, p. 253.
\smallskip
Finally, for any $j > 0$, let $r_j$ denote the probability that
starting at position $0$, the infinite random walk on $\mathbb{Z}$
reaches $j$ before it would return to $0$. We have
\begin{equation}\label{noreturn}
r_j=\frac1{2j}.
\end{equation}

\section{Proof of Theorem~\ref{fo}}
The vertices and edges of the comb $\mathbb{C}^2$ that belong to
the $x$-axis form the {\em backbone}. The connected components of
the graph obtained from $\mathbb{C}^2$ after the removal of the
backbone are called {\em teeth}.

\smallskip
We consider the projections $(X_i)$ and $(Y_i)$ of the
two-dimensional walk $(X_i,Y_i)$, separately. First, we reduce
these one-dimensional walks by getting rid of the steps when the
value does not change. In this way, horizontal steps contribute
only to the {\em reduced projection} $(X'_i)$ and vertical steps
contribute only to the {\em reduced projection} $(Y'_i)$. Note
that with probability $1$ both reduced walks are infinite and they
are distributed as the standard random walk on the line. Let us
consider the random walk on the comb up to (and including) the
$n$\/th {\em vertical} move. We call this walk $W$. Its reduced
projections are $(X'_i)_{i=0}^a$ and $(Y'_i)_{i=0}^n$, where the
random variable $a$ is the number of horizontal moves in $W$. Let
$c$ stand for the number of sites {\em on the backbone} reached by
$W$. It is easy to describe the asymptotic behavior of $E[a]$ and
$E[c]$.
%as was outlined in \cite{WeH86}. We include the simple proofs, for
%completeness.

\begin{lemma}\label{lem:ac}
$$E[a]=\left(\sqrt{\frac2{\pi}}+o(1)\right)n^{1/2};$$
$$E[c]=\Theta(n^{1/4}).$$
\end{lemma}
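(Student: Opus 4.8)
The plan is to decouple the horizontal and vertical projections of $W$ and reduce both estimates to the one-dimensional facts of Section~\ref{line}. The key structural observation is that a horizontal move can occur only while the walker sits on the backbone, i.e.\ only when the reduced vertical walk $(Y'_i)$ is at $0$; away from the backbone every available edge is vertical. Hence $(Y'_i)_{i=0}^n$ is exactly a standard random walk on $\mathbb Z$, and at each visit of this walk to $0$ the walker first performs a run of horizontal moves whose length $G$ is geometric (with $P(G=k)=2^{-(k+1)}$, so $E[G]=1$ and $\operatorname{Var}(G)=2$) and independent of the vertical walk, before its next vertical move carries it into a tooth. This independence is what lets me handle the two reduced projections separately.

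For the first estimate I would write $a=\sum_i G_i$, where the sum runs over the visits of $(Y'_i)$ to $0$ at times $0,\dots,n-1$ and the $G_i$ are the independent mean-$1$ run lengths above. The number of these visits equals $A_{n-1}$ in distribution and is independent of the $G_i$, so Wald's identity gives $E[a]=E[A_{n-1}]\cdot E[G]=E[A_{n-1}]$; by \eqref{A_n} this is $(\sqrt{2/\pi}+o(1))\sqrt n$, as claimed.

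For the second estimate, note that the backbone sites reached by $W$ are exactly the distinct values taken by the reduced horizontal walk $(X'_i)_{i=0}^a$, so $c=B_a$ is the range of a standard walk run for $a$ steps, where $a$ is random but independent of the horizontal (left/right) increments. The upper bound is then immediate: conditioning on $a$ and using the uniform bound $E[B_m]\le C\sqrt m+1$ coming from \eqref{B_n}, together with Jensen's inequality, gives $E[c]\le C\,E[\sqrt a]+1\le C\sqrt{E[a]}+1=O(n^{1/4})$. For the matching lower bound I would, using independence of $a$ from the horizontal increments, restrict the sum $E[c]=\sum_m P(a=m)\,E[B_m]$ to $m\ge c_1\sqrt n$, obtaining $E[c]\ge c_2\sqrt{c_1\sqrt n}\,\cdot P(a\ge c_1\sqrt n)$; so it suffices to show $P(a\ge c_1\sqrt n)=\Omega(1)$ for a suitable constant $c_1>0$.

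The main obstacle is precisely this anti-concentration statement: knowing $E[a]=\Theta(\sqrt n)$ does not by itself prevent $\sqrt a$ from being typically far below $\sqrt{E[a]}$. I would resolve it by a second-moment (Paley--Zygmund) argument, using the standard estimate $E[A_n^2]=O(n)$; combined with $\operatorname{Var}(a)=E[N]\operatorname{Var}(G)+\operatorname{Var}(N)(E[G])^2=O(n)$ (with $N$ the number of returns), this yields $E[a^2]=O(n)$ and hence $P(a\ge\tfrac12 E[a])\ge (E[a])^2/(4E[a^2])=\Omega(1)$. Taking $c_1$ just below the implied constant completes the lower bound and gives $E[c]=\Theta(n^{1/4})$.
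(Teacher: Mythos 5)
Your proof is correct, and its skeleton coincides with the paper's: horizontal moves come in independent geometric runs (mean $1$) attached to the backbone visits of the vertical reduced walk, the number of such runs is distributed as $A_{n-1}$, so Wald's identity gives $E[a]=E[A_{n-1}]$; and conditionally on $a$ the backbone count $c$ is distributed as $B_a$, so Jensen yields the $O(n^{1/4})$ upper bound exactly as in the paper. Where you genuinely diverge is the anti-concentration step behind the lower bound $E[c]=\Omega(n^{1/4})$. The paper gets $P\bigl(a\ge\tfrac12\sqrt n\bigr)\ge\tfrac14+o(1)$ from two ingredients: the half-normal limit law for $A_{n-1}/\sqrt n$ (cited from R\'enyi), which gives $P\bigl(d\ge\tfrac12\sqrt n\bigr)\ge\tfrac12+o(1)$, and the exact symmetry that, conditioned on $d$, the inequality $a\ge d$ holds with probability exactly $\tfrac12$ (the sum of $d$ mean-$1$ geometrics is $\ge d$ precisely when a fair coin sequence of length $2d-1$ has at least $d$ heads); it then feeds $E[\sqrt a]=\Omega(n^{1/4})$ back into the identity $E[c]=(2\sqrt{2/\pi}+o(1))E[\sqrt a]$. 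Your route instead uses Paley--Zygmund with $E[a^2]=O(n)$, obtained from the second-moment bound $E[A_n^2]=O(n)$ (which indeed follows from $p_{m,0}=O(1/\sqrt m)$ and the estimates of Section~\ref{line}) together with the variance formula $\mathrm{Var}(a)=E[N]\,\mathrm{Var}(G)+\mathrm{Var}(N)(E[G])^2$ for a random sum; this is legitimate because, as you note, the number of runs $N$ is a function of the vertical reduced walk alone and is therefore independent of the run lengths $G_i$. Your argument is more self-contained --- it needs only moment estimates rather than a distributional limit theorem --- at the price of an extra variance computation, while the paper's is shorter thanks to the slick exact $\tfrac12$-symmetry; both are complete, and your final step (restricting $E[c]=\sum_m P(a=m)E[B_m]$ to $m\ge c_1\sqrt n$ and using monotonicity of $E[B_m]$) closes the argument correctly.
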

\begin{proof}
Let $d$ denote the number of times $W$ moves from a tooth to a
position on the backbone, including the starting position at the
origin, but not including the final position after the last move,
even if it satisfies this condition. Clearly, $d$ is distributed
as $A_{n-1}$. $W$ has $d$ chances to make horizontal moves. At
each chance, it makes precisely $i$ consecutive horizontal moves
with probability $2^{-i-1}$. Thus, for the expected value of $a$,
the number of horizontal moves in $W$, we have
$$E[a]=E[d\sum_{i=0}^{\infty}i2^{-i-1}]=E[d]=E[A_{n-1}]
=\left(\sqrt{\frac2{\pi}}+o(1)\right)\sqrt n.$$ Here the last
equality follows by (\ref{A_n}).

\smallskip
Obviously, the conditional distribution of $c$, given $a$, is the
same as the distribution of $B_a$, so that by (\ref{B_n}) we
obtain
$$E[c|a]=\left(2\sqrt{\frac2{\pi}}+o(1)\right)\sqrt a.$$
Hence, we have
\begin{equation}\label{Ec}
E[c]=\left(2\sqrt{\frac2{\pi}}+o(1)\right)E[\sqrt a]\le
\left(2\sqrt{\frac2{\pi}}+o(1)\right)\sqrt{E[a]}=\left(\frac{2^{7/4}}{\pi^{3/4}}+o(1)\right)n^{1/4}.
\end{equation}
On the other hand, the limiting distribution of $d/\sqrt n$ (i.e.,
that of $A_{n-1}/\sqrt n$) is equal to the distribution of the
absolute value of a random variable with standard normal
distribution (see, e.g., \cite{Re07}, p. 506). Therefore, we have
$P(d/\sqrt{n}\ge 1/2)\ge 1/2+o(1)$. Using the fact that, given
$d$, the inequality $a\ge d$ holds with probability exactly $1/2$,
we obtain that $E[\sqrt a]\ge(1/4+o(1))\sqrt{\frac12\sqrt
n}=\Theta(n^{1/4})$. In view of (\ref{Ec}), this implies the
second part of the lemma.
\end{proof}

\smallskip
We prove Theorem~\ref{fo} by estimating the expected number of
sites $V'_n$ reached by $W$. As $W$ makes $n+a$ moves, it reaches
the $V_n$ sites visited in the first $n$ steps and potentially at
most $a$ further sites. According to the first part of
Lemma~\ref{lem:ac}, this potential increase is so small that it
does not change the asymptotic behavior of the expected value.

\smallskip
First, we establish the upper bound. We classify a site $(i,j)$ as
{\em close} if $|j|<n^{1/4}$, {\em far} if
$|j|>2n^{1/2}\log^{1/2}n$, and {\em intermediate} if
$n^{1/4}\le|j|\le2n^{1/2}\log^{1/2}n$, and estimate the expected
number of sites reached in each class, separately.

The number of close sites reached is less than $2n^{1/4}+1$ times
the number $c$ of sites reached on the backbone. Thus, by
Lemma~\ref{lem:ac}, the expected number of close sites reached is
$O(\sqrt n)$. The number of far sites reached can easily be
bounded by the number of steps $0\le i\le n$ with
$|Y'_i|>2n^{1/2}\log^{1/2}n$. For each $i\le n$, the Chernoff bound
gives $P(|Y'_i|>2n^{1/2}\log^{1/2}n)<1/n$. Therefore, the expected
number of far sites reached is at most $1$.

Let us call the tooth of the comb containing the site where $W$
ends the {\em final tooth}. In case $Y'_n=0$, there is no final
tooth. The total number of intermediate sites on the final tooth
is less than $n^{1/2}\log^{1/2}n$, so this also bounds the
expected number of intermediate sites reached on the final tooth.

Finally, we consider the number $w$ of intermediate sites reached
outside the final tooth. When such a site at distance $j$ is
reached, it must be reached through a vertical move. If the $i$-th
vertical move reaches it {\em for the last time} in $W$, then we
have $|Y'_i|=j$ and the part of the walk after the $i$-th vertical
move (starting from this site) must reach the backbone before it
comes back to the same site. The probability for $|Y'_i|=j$ is
$2p_{i,j}$. Assuming that this happens, according to
(\ref{noreturn}), the probability that the infinite random walk
reaches the backbone before it returns to the same site is
$r_j=1/(2j)$. Hence, the total probability is at most $p_{i,j}/j$.
We bound $E[w]$ by summing $p_{i,j}/j$ over all $1\le i\le n$ and
$n^{1/4}\le j\le2n^{1/2}\log^{1/2}n$. Using the fact that
$p_{i,j}=0$ if $i+j$ is odd and, by (\ref{prob}),
$p_{i,j}\le(\sqrt{2/{\pi}}+o(1))/\sqrt{i}$, it follows that
\begin{equation}\label{upper}
E[w]\le\sum_j\sum_ip_{i,j}/j\le\left(\sqrt{\frac2{\pi}}+o(1)\right)\sum_{i,j\atop i+j\mbox{ \scriptsize even}}\frac1{j\sqrt i}\le\left(\sqrt{\frac2{\pi}}+o(1)\right)\frac14\sqrt n\log n.
\end{equation}
Summing over all the sites reached by $W$, we obtain the upper
bound in the theorem:
$$E[V_n]\le E[V'_n]\le\left(\frac1{2\sqrt{2\pi}}+o(1)\right)\sqrt{n}\log n.$$

\smallskip
Before turning to the lower bound, we introduce the symbol $u_j$
to denote the probability that $W$ reaches the site $(0,j)$. We
need the following lemma.

\begin{lemma}\label{lem:u}
$$u_j=O\left(\frac{n^{1/4}\log n}{|j|+1}\right).$$
\end{lemma}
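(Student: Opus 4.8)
The plan is to exploit the fact that a site $(0,j)$ with $j\neq 0$ lies on the tooth at $x=0$, which is attached to the backbone only at the origin. Hence $W$ can visit $(0,j)$ only during a \emph{vertical excursion from $(0,0)$}: a maximal run of vertical moves that begins at the origin (and, if completed, ends there). I would first dispose of the case $j=0$, where $u_0=1$ and the bound holds trivially since $n^{1/4}\log n\ge 1$; by the up--down symmetry of the comb it then suffices to treat $j>0$, the case $j<0$ following by reflection and $|j|+1\le 2|j|$.

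Second, I would set up an excursion decomposition. Let $N$ be the number of vertical excursions from $(0,0)$ that \emph{begin} at or before the $n$-th vertical move, and let $B_k$ be the indicator that the $k$-th such excursion reaches height $j$, i.e.\ its underlying one-dimensional walk hits $j$ before returning to $0$. If $W$ reaches $(0,j)$ it must do so inside one of these excursions, so a union bound gives $u_j\le E[\sum_{k\ge1}\mathbf{1}(k\le N)B_k]$. The key point is that $\{k\le N\}$ is measurable with respect to the walk \emph{up to the start} of the $k$-th excursion, whereas $B_k$ is the outcome of a fresh excursion with the distribution of a standard walk started at $0$; therefore $E[\mathbf{1}(k\le N)B_k]=r_j\,P(k\le N)$, and summing yields $u_j\le r_j\,E[N]=E[N]/(2j)$ by \eqref{noreturn}. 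Defining $N$ through the excursion \emph{start} time (not its completion) is what makes $\{k\le N\}$ independent of $B_k$; allowing excursions that overrun the $n$-th vertical move only inflates the count, so the inequality is safely one-sided.

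Third, I would estimate $E[N]$. Every excursion counted by $N$ starts at a visit of $W$ to the backbone vertex $(0,0)$, so $N\le L_0$, the number of such visits. The walker sits at $(0,0)$ exactly when the reduced horizontal walk $(X'_i)$ is at $0$; each visit of $(X'_i)$ to $0$ is followed by a geometric (mean-bounded) number of vertical excursions before the next horizontal move. Thus $E[L_0]$ equals, up to a constant factor, the expected local time at $0$ of $(X'_i)$, a walk of random length $a$. By \eqref{A_n} this local time is $(\sqrt{2/\pi}+o(1))\sqrt{a}$ conditionally on $a$, so $E[L_0]=O(E[\sqrt a])\le O(\sqrt{E[a]})=O(n^{1/4})$ by Jensen's inequality together with the first part of Lemma~\ref{lem:ac}.

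Combining the two bounds gives $u_j\le E[N]/(2j)=O(n^{1/4}/j)$, which is in fact slightly stronger than the asserted $O\!\big(n^{1/4}\log n/(|j|+1)\big)$; the logarithmic factor in the statement leaves comfortable room for cruder bookkeeping. I expect the main obstacle to lie precisely in the two decoupling steps: isolating the per-excursion reach probability $r_j$ from the stopping-dependent count $N$, and cleanly separating the local time of the horizontal walk from the geometric holding times when estimating $E[N]$. Both are routine once the excursion structure is set up carefully, but they are where all the probabilistic care is needed.
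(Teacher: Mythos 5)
Your proof is correct, but it takes a genuinely different route from the paper's. You argue directly on the tooth at $x=0$: you decompose the visits to that tooth into vertical excursions from the origin, bound the probability that any single excursion reaches height $j$ by $r_j=1/(2j)$ via (\ref{noreturn}), and control the expected number $N$ of excursions begun before the $n$-th vertical move by the local time of the reduced horizontal walk at $0$, which is $O(E[\sqrt a])=O(\sqrt{E[a]})=O(n^{1/4})$ by (\ref{A_n}), Jensen, and the first half of Lemma~\ref{lem:ac}. The two decoupling steps you flag do go through: $\{k\le N\}$ is determined by the walk before the $k$-th excursion starts, so the strong Markov property gives $E[\mathbf{1}(k\le N)B_k]=r_jP(k\le N)$, and a Wald-type conditioning gives $E[N]\le E[\#\{s\le a:X'_s=0\}]$, where the independence of $(X'_s)$ from $a$ (the interleaving depends only on the vertical walk and the backbone coin flips) justifies invoking (\ref{A_n}) with random length $a$. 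The one misstatement is that ``the walker sits at $(0,0)$ exactly when $(X'_i)$ is at $0$'' --- that also requires $Y'=0$ --- but your subsequent batching of excursions between consecutive horizontal moves is the correct fix, so this is harmless. The paper instead argues indirectly, by double counting: it sets $t_i=P(W\mbox{ reaches }(i,0))$, so $\sum_i t_i=E[c]$, observes that the longer walk $W'$ (up to the $2n$-th vertical move) reaches $(i,j)$ with probability at least $t_iu_j$, that reaching $(i,j)$ forces at least $|j|+1$ visited sites on that tooth, and hence $(|j|+1)u_jE[c]\le E[V'_{2n}]=O(\sqrt n\log n)$ by the already-proven upper bound (\ref{upper}); dividing by $E[c]=\Theta(n^{1/4})$ (the lower-bound half of Lemma~\ref{lem:ac}) yields the lemma. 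The paper's route is shorter on the page because it recycles (\ref{upper}) and needs no new probabilistic machinery, but it inherits the $\log n$ factor from (\ref{upper}); your direct argument is self-contained and proves the stronger bound $u_j=O(n^{1/4}/(|j|+1))$ without the logarithm, which is more than sufficient for the only place the lemma is used, namely to get $u_j=o(1)$ for $j>n^{1/4}\log^2 n$ in the lower-bound argument.
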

\begin{proof}
Let $t_i$ stand for the probability that $W$ reaches the site
$(i,0)$. Clearly, we have $E[c]=\sum_{i=-\infty}^{+\infty}t_i$.
Let $W'$ denote the random walk on the comb starting at the origin
and ending with the $2n$-th vertical move. For the number
$V'_{2n}$ of the sites $W'$ reached, we have
\begin{equation}\label{reached}
E[V'_{2n}]=O(\sqrt n\log n), \end{equation} by the upper bound
(\ref{upper}) we have just proved. For every $i$ and every $j$,
the probability that $W'$ reaches the site $(i,j)$ is at least
$t_iu_j$. Indeed, with probability $t_i$ the walk reaches $(i,0)$
before the $n$-th vertical move, and after that it reaches $(i,j)$
within $n$ further vertical steps with probability $u_j$. Note
that if $(i,j)$ is reached, then at least $|j|+1$ sites of the
form $(i,j')$ are reached (along the same tooth). Therefore, we
obtain
$$E[V'_{2n}]\ge\sum_i(|j|+1)t_iu_j=(|j|+1)u_jE[c].$$ From here,
applying our upper bound (\ref{reached}) and Lemma~\ref{lem:ac},
the result follows. \end{proof}

\medskip

Now we are ready to prove the lower bound in Theorem~\ref{fo}.

Let $Z_i$ stand for the site reached by the $i$-th vertical move
of the random walk $W$. Let $q_{i,j}$ stand for the probability
that $|Y'_i|=j$ and $Z_i$ is reached by the $i$-th vertical move
{\em for the last time} in $W$, $0<i\le n$. Each site that does
not belong to the backbone and is reached by $W$, is reached for
the last time by a single uniquely determined vertical move. Thus,
we have
$$E[V'_n]=E[c]+\sum_{0<j,0<i\le n}q_{i,j}.$$

Suppose that $Z_i$ is not on the backbone. If after the $i$-th
vertical move the random walk $W$ returns to the backbone before
it would revisit $Z_i$, and after returning to the backbone it
still does not visit $Z_i$ during the following $n$ vertical
moves, then $Z_i$ was reached in $W$ for the last time by the
$i$-th vertical move. Therefore, using the notation in
Section~\ref{line}, we have
\begin{equation}\label{q}
q_{i,j}\ge 2p_{i,j}r_{j}(1-u_j)\;\;\; {\rm for}\;\;\: 0<i\le n,\;
j>0.
\end{equation} The lower bound for $E[V'_n]\ge\sum q_{i,j}$ can be
obtained by evaluating the terms $q_{i,j}$. Let us consider only
those terms $q_{i,j}$ for which $n^{1/4}\log^2n<j<n^{1/2}/\log
n,\;$ $j^2\log n\le i\le n$, and $i+j$ is even. For these values,
by Lemma~\ref{lem:u}, we have $u_j=o(1)$. Since
$j/\sqrt{i}\rightarrow 0$, (\ref{prob}) yields that
$p_{i,j}=(\sqrt{2/\pi}+o(1))/\sqrt{i}$. Combining this with
(\ref{noreturn}), inequality (\ref{q}) gives
$$q_{i,j}\ge\left(\sqrt{\frac{2}{\pi}}+o(1)\right)\frac1{\sqrt{i}j}.$$
Thus, for a fixed $j$, we get
$$\sum_{j^2\log n\le i\le n,\;i\equiv j} q_{i,j}\ge
\left(\sqrt{\frac{2}{\pi}}+o(1)\right)\frac{\sqrt{n}}{j}.$$
Summing over all $j$, $n^{1/4}\log^2n<j<n^{1/2}/\log n$, we obtain
$$E[V'_n]\ge\left(\sqrt{\frac{2}{\pi}}+o(1)\right)\sqrt{n}\sum_j\frac1j
\ge\left(\frac{1}{2\sqrt{2\pi}}+o(1)\right)\sqrt n\log n,$$ as
claimed. Note that our estimates for the expectation of $V'_n$
carry over to the expectation of $V_n$, as $|V'_n-V_n|\le a$ and
$E[a]=O(\sqrt n)$. This completes the proof of Theorem~\ref{fo}.
$\Box\Box$

\bigskip
\noindent{\bf Acknowledgment.} We are indebted to G. Tusn\'ady for
calling our attention to the problem addressed in this note, and
to E. Cs\'aki, M. Cs\"org\H o, A. F\"oldes, and P. R\'ev\'esz for 
their valuable remarks.


\begin{thebibliography}{}
\itemsep -2pt
%\small
\bibitem[Be06]{Be06}  D. Bertacchi, Asymptotic behaviour of the simple
random walk on the 2-dimensional comb, {\it Electron. J. Probab.}
{\bf 11} (2006), no. 45, 1184--1203.

\bibitem[BeZ03]{BeZ03} D. Bertacchi and F. Zucca, Uniform asymptotic
estimates of transition probabilities on combs, {\it J. Aust.
Math. Soc.} {\bf 75} no. 3, 325--353.

\bibitem[CaR92]{CaR92} D. Cassi and S. Regina, Random walks on
$d$-dimensional comb lattices, {\it Mod. Phys. Lett. B} {\bf 6}
(1992),1397--1403.

\bibitem[CsCs09]{CsCs09} E. Cs\'aki, M. Cs\"org\H o, A. F\"oldes, and
  P. R\'ev\'esz,
Strong limit theorems for a simple random walk on the
2-dimensional comb, {\it Electron. J. Probab.} {\bf 14} (2009),
no. 82, 2371--2390.

\bibitem[CsCs11]{CsCs11} E. Cs\'aki, M. Cs\"org\H o, A. F\"oldes, and
P. R\'ev\'esz, On the local time of random walk on the
2-dimensional comb, {\it Stochastic Processes and their
Applications} {\bf 121}:6 (2011) 1290--1314.

\bibitem[Fe68]{Fe68} W. Feller, {\em An Introduction to Probability Theory
and its Applications. Vol. I. (3rd ed.)}, John Wiley \& Sons, New
York, 1968.

\bibitem[Ge86]{Ge86} P. Gerl, Natural spanning trees of $\mathbb{Z}^d$ are
recurrent, {\it Discrete Math.} {\bf 61} (1986), 333--336.

\bibitem[KrP04]{KrP04} M. Krishnapur and Y. Peres, Recurrent graphs where two
independent random walks collide finitely often, {\it Electron.
Comm. Probab.} {\bf 9} (2004), 72--81

\bibitem[Lo96]{Lo96} L. Lov\'asz, Random walks on graphs: a survey,
in: {\it Combinatorics, Paul Erd\H os is eighty, Vol. 2, Bolyai
Soc. Math. Stud.} {\bf 2}, 353--397, J. Bolyai Math. Soc.,
Budapest, 1996.

\bibitem[PaP85]{PaP85}  W. Panny and H. Prodinger, The expected height of paths
for several notions of height,{\it Studia Sci. Math. Hungar.} {\bf
20} (1985), no. 1-4, 119--132.

\bibitem[Po21]{Po21} G. P\'olya, \"Uber eine Aufgabe der
Wahrscheinlichkeitsrechnung betreffend die Irrfahrt im
Strassennetz, {\it Math. Annalen} {\bf 84} (1921), 149--160.

\bibitem[R\'e07]{Re07} A. R\'enyi, {\it Probability Theory,} Dover Publications,
Mineola, New York, 2007. Originally published by North-Holland,
Amsterdam and Akad\'emiai Kiad\'o, Budapest, 1970.

\bibitem[WeH86]{WeH86} G. H. Weiss and Sh. Havlin, Some properties of a random
walk on a comb structure,{\it Physica A} {\bf 134} (1986),
474--482.

\bibitem[Wo00]{Wo00} W. Woess, {\it Random walks on infinite graphs and groups,
Cambridge Tracts in Mathematics} {\bf 138}, Cambridge University
Press, London, 2000.
\end{thebibliography}
\end{document}